\newtheorem{theorem}{Theorem}
\newtheorem{lemma}{Lemma}
\theoremstyle{definition}
\newtheorem{definition}{Definition}
\newcommand{\Ushape}{$\mathbf{U}$}
\begin{document}



\title{
Complexity of Interlocking Polyominoes
}

\author{
Sidharth Dhawan\\
Massachussets Institute of Technology \\
sidharth@mit.edu\\
\vspace{0.5in}\\
Zachary Abel\\
Massachusetts Institute of Technology\\
zabel@mit.edu\\
\vspace{1in}
}


\date{
July 2011
}

\begin{singlespace}
\maketitle
\end{singlespace}

\begin{abstract}

Polyominoes are a subset of polygons which can be constructed from integer-length squares fused at their edges. A system of polygons $P$ is interlocked if no subset of the polygons in $P$ can be removed arbitrarily far away from the rest. It is already known that polyominoes with four or fewer squares cannot interlock. It is also known that determining the interlockedness of polyominoes with an arbitrary number of squares is PSPACE hard. Here, we prove that a system of polyominoes with five or fewer squares cannot interlock, and that determining interlockedness of a system of polyominoes including hexominoes (polyominoes with six squares) or larger polyominoes is PSPACE hard.

\vspace{1in}
\begin{center}\textbf{Summary}\end{center}

The goal of this research was to discover which sets of grid-aligned polyominoes could be collectively interlocked. A system of shapes is collectively interlocked if none can move arbitrarily far away from the rest by any kind of rotation or translation. For example, two links in a chain are collectively interlocked in three dimensions, but two knotted strings are not, because a knot can be untied by a specific set of movements. A polyomino is a shape made entirely of $n$ unit squares fused at the edges. For example, the game ``Tetris" is played with tetrominoes, or shapes made from four fused unit squares. The study of collective polyomino interlockedness may have applications in the fields of protein folding, motion planning (which is an important problem in artificial intelligence), and material science. In this paper, we prove that a system of polyominoes made of five or fewer squares can never be collectively interlocked. We further prove that determining interlockedness of a system in which polyominoes with six or more squares are present is computationally intractable.
\end{abstract}


\section{Introduction}

\subsection{Polygons and Polyominoes}
A polyomino is a shape assembled entirely from squares attached at edges. Here, we explore the notion of {\it collective interlockedness} of a system of polyominoes. A system of polyominoes is collectively interlocked if no polyominoes can be moved arbitrarily far away from any of the others. It is not interlocked if any subset of polyominoes can move arbitrarily far away from the rest. We only consider systems of polyominoes whose squares are initially placed in grid-aligned configurations, but we allow the polyominoes to rotate at any angle from their original configurations. We discuss the computational complexity of deciding different instances of polyomino interlockedness.

We start in Section~\ref{sec:terms} by defining terms that will be used in later theorems. In section 3 we attempt to create an upper bound for non-interlockedness. It is known that polyominoes with four or fewer squares (as well as the majority of pentominoes) belong to a class of polyominoes that cannot interlock. We show by analysis of special cases that pentominoes also cannot interlock. We also consider lower bounds for interlockedness in section 4. We show a novel construction of interlocking hexominoes and prove that it is rigid (i.e. that no polyomino can be translated at all). In section 5 we consider a lower bound for PSPACE hardness of polyomino interlockedness. We examine Demaine and Hearn's proof of PSPACE hardness of polyomino interlockedness and show that this proof holds for polyominoes with 6 or more squares.

\section{Background} \label{sec:terms}

First, we formally define polyominoes, interlockedness, and PSPACE hardness, which are important basic concepts in this paper.

\begin{definition}
{\it A polyomino is a polygon with integer side lengths and only right angles that can be created by joining squares of unit length at the edges.}
\end{definition}

\begin{definition}
{\it A system of polygons $P$ is} interlocked {\it if and only if}
\begin{itemize}
\item {\it it is arranged in a grid-aligned configuration}
\item {\it no subset of these polygons can be separated arbitrarily far from the rest by rotation at any angle or translation. In particular, we allow simultaneous movement of shapes.}
\item {\it polygons never intersect at any time.}
\end{itemize}
\end{definition}

\begin{definition}
{\it A problem belongs to the class} PSPACE {\it if a computer with finite memory can find its solution by writing to and reading from B bits, where B is a polynomial function of the size of the inputs.}
\end{definition}
We often define complexity classes based on the amount of time (number of steps) or space (amount of memory) required by a computer to solve problems in these classes. For example, problems in the class P can be solved by computers in polynomial time.
A problem is hard for the class PSPACE if it is at least as difficult as any problem in the class PSPACE. Problems hard for the class PSPACE are generally considered computationally intractable.

We now define rigidity, which is a stronger version of interlockedness. Later in this paper, we will be proving rigidity of certain systems of polyominoes, because it is in some cases easier to prove.

\begin{definition}
{\it A system of polygons is} rigid {\it if, given any configuration in which all polyominoes except one have been displaced by at most $\epsilon$, no polyomino has been displaced (for small enough $\epsilon$).}
\end{definition}

Claiming that a system of polygons is rigid is stronger than claiming they are not interlocked, because in a rigid system no polygon can be displaced at all, whereas interlocked polygons can be displaced a finite distance relative to each other (though not an infinite distance). 

We now turn our attention to special classes of polyominoes. These classes will be mentioned in the proofs, since some classes can be spread apart more easily.

%

	We define a class of polygons that can be easily separated in just one direction. A polygon is {\bf monotone} in direction $\theta$ if the region of intersection of the polygon with any line drawn perpendicular to $\theta$ is no more than one connected line segment, as shown in Figure~\ref{ymonotone}. Some polyominoes are also monotone in both the $x$ and $y$ directions, which makes them even easier to separate from other polyominoes. An orthogonal polygon is {\bf orthogonally convex} if it is monotonic in both the $x$ and $y$ directions.

\begin{figure}[h]
\begin{center}
\includegraphics[height=0.5in]{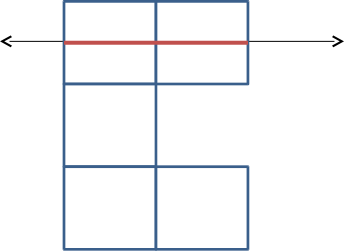}
\end{center}
\caption{A $y$-monotone polygon}
\label{ymonotone}
\end{figure}

\begin{definition}
{\it The} pockets {\it in direction $\theta$ of a polygon $P$ are the largest disjoint regions not present in $P$ that are present in the smallest polygonal region that contains $P$ and is monotone in direction $\theta$.}
\end{definition}
A pocket is shown in Figure~\ref{pocket}.

\begin{figure}[h]
\begin{center}
\includegraphics[height=0.5in]{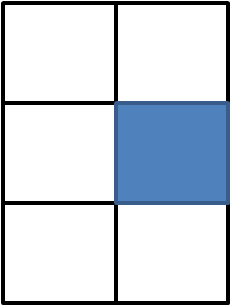}
\end{center}
\caption{A pocket}
\label{pocket}
\end{figure}

\begin{definition}
{\it A} translational ordering (TO) {\it is a non-simultaneous sequence of translations that allows the polygons to be moved arbitrarily far away from each other on the plane. }
\end{definition}

\begin{definition}
{\it A} unidirectional translational ordering (UTO) {\it in a direction $\theta$ is a translational ordering in which the polygons are restricted to moving in only a single direction, $\theta$.}
\end{definition}	
	Claiming that a TO exists for a system of polygons is stronger than saying they are not interlocked, because simultaneous motion is not allowed in a TO. These definitions will also be used when discussing different cases of separability.

\section{Non-Interlockedness of Pentominoes}
Our first consideration is finding a lower bound in the number of squares for polyomino interlockedness. Haldar \cite{Haldar} showed that a system of orthogonally convex polygons can never be interlocked. Since all polyominos made of four squares or fewer are orthogonally convex, it follows that a UTO in both the $x$ and $y$ directions exists for all tetrominoes, triominoes, and dominoes. We cannot make the same claim for pentominoes, because not all pentominoes are orthogonally convex. Here we prove that pentominoes still cannot interlock.
We begin with two important lemmas from \cite{Haldar}.

\begin{theorem}
A system of grid-aligned polyominoes with five or fewer squares can never be interlocked.
\end{theorem}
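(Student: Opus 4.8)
The plan is to prove that any system of polyominoes with at most five squares admits a translational ordering (in fact, ideally a UTO in some direction), which by definition implies the system is not interlocked. The strategy follows the approach suggested by the cited Haldar result: orthogonally convex polygons always admit a UTO in both the $x$ and $y$ directions, so the only pentominoes that require separate attention are those that fail to be orthogonally convex. I would begin by enumerating the pentominoes and isolating exactly which ones are not orthogonally convex, namely those containing a pocket — the \Ushape-pentomino, the \textbf{T}, \textbf{V}, \textbf{W}, \textbf{Z}, and similar shapes. Since every tetromino and smaller polyomino is orthogonally convex, the entire burden of the proof rests on handling these few non-convex pentomino shapes.

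The core idea is to show that even the non-orthogonally-convex pentominoes have pockets so shallow that no other polyomino of at most five squares can wedge into them in a way that blocks all separating motions. Concretely, I would argue that for any such pentomino $P$, there is always at least one direction $\theta$ in which $P$ is monotone, and moreover that the pockets of $P$ (in the definition given above) are too small or too open to trap a neighboring piece when we attempt to slide pieces apart along $\theta$. The key geometric claim to establish is a lemma of the form: in any grid-aligned system of polyominoes each with at most five squares, one can always select a subset and a direction such that that subset can translate to infinity without collision. I would prove this by a case analysis on the local geometry around each pocket, showing that any piece partially occupying a pocket can first be extracted and then the remaining pieces peeled off in a fixed direction.

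The main steps, in order, are: (1) reduce to the non-orthogonally-convex pentominoes via Haldar's result; (2) for each such pentomino, identify its pockets and catalogue which shapes (up to five squares) could plausibly interlock with it by filling a pocket; (3) show in each case that the occupying piece, together with the pocketed piece, can be jointly or sequentially translated apart, typically by exhibiting an explicit escape direction; and (4) assemble these local arguments into a global translational ordering by induction on the number of pieces, removing one separable piece (or separable cluster) at a time. The induction works because removing a piece only opens up space and never creates new obstructions for the remaining pieces.

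I expect the main obstacle to be step (3): ruling out every way in which a small polyomino might fill the pocket of a non-convex pentomino so as to resist all translations. Because the problem allows simultaneous motion and arbitrary rotation in the definition of interlocking, I must be careful that exhibiting a mere \emph{translational} escape genuinely suffices — but this is exactly why producing a TO is the right target, since a TO is strictly stronger than non-interlockedness and sidesteps the need to reason about rotations at all. The delicate part is confirming that a pocket of depth at most a couple of unit squares, as forced by the five-square bound, can never simultaneously constrain a piece from both sides; I would handle this by observing that trapping a piece in all translational directions requires pockets or protrusions of total extent exceeding what five squares permit, giving a clean counting argument to close the case analysis.
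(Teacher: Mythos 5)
Your opening move matches the paper's: invoke Haldar's result that orthogonally convex polygons always admit a UTO, so only the non-orthogonally-convex pentominoes need special treatment. But two things go wrong after that. First, a factual slip: the \textbf{T}, \textbf{V}, \textbf{W}, and \textbf{Z} pentominoes \emph{are} orthogonally convex (every horizontal and every vertical line meets each of them in a single segment); the \Ushape{} is the \emph{only} pentomino that is not. This slip is harmless in the sense that it only adds unnecessary cases, but it suggests the case analysis has not actually been carried out. Second, and more seriously, your mechanism for handling the \Ushape{} is different from the paper's and has a genuine gap. You propose to extract the piece occupying the pocket locally and then induct, justified by a counting argument that a shallow pocket "can never simultaneously constrain a piece from both sides." But local extractability is not the issue: the piece filling the pocket may be perfectly free relative to the \Ushape{} and still be blocked by arbitrarily many \emph{other} pieces in the system, and your induction never establishes that some piece or cluster can be removed \emph{first}. "Removing a piece only opens up space" is true but gives you no base case and no ordering.

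The paper closes exactly this gap with a grouping argument you are missing: each $x$-monotone \Ushape{} is fused with the polyomino(es) touching its pocket into a single composite shape, and a case analysis shows every such composite (including the case where \emph{two} \Ushape 's attach to the same third pentomino --- a configuration your pairwise catalogue would not cover) is collectively $y$-monotone. Since all groups are then monotone in a common direction, Haldar's lemma applies at the \emph{group} level and yields a global UTO of the groups; afterwards each group, being a system of $x$-monotone pieces, is separated internally by the same lemma in the other direction. That is the step that turns your local observations into a global translational ordering, and without it (or an equivalent device for producing the first removable cluster) your proof does not go through.
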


\begin{lemma}\label{lemma:monotone}
A UTO of a system of polygons exists in direction $\theta$ if all the polygons are monotone in direction $\theta + \frac{\pi}{2}$ \cite{Haldar}. 
\end{lemma}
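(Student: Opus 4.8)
The plan is to fix coordinates so that $\theta$ points along the positive $x$-axis; then ``monotone in direction $\theta+\frac{\pi}{2}$'' means exactly that every horizontal line meets each polygon in at most one segment, so at each height $y$ the slice of a polygon $P$ is a single interval $[\ell_P(y),r_P(y)]$, the set $I_P$ of heights at which $P$ appears is an interval (the continuous image of a connected polygon), and $\ell_P,r_P$ are continuous on $I_P$. The aim is to produce the UTO by repeatedly sliding one polygon to $+\infty$ along the $x$-axis, i.e.\ in direction $\theta$. So the whole problem reduces to a single claim: in any such configuration there is always a polygon with nothing to its right, which can therefore be translated arbitrarily far in the $+x$ direction without collision. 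Removing it and inducting then yields the full UTO, and pushing each polygon progressively farther realizes ``arbitrarily far away.''

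The engine of the argument is a no-swap sub-lemma: for two disjoint polygons $A,B$ that share heights, one of them lies entirely to the left of the other throughout their common height range. To prove this I would restrict to the common range $[c,d]=I_A\cap I_B$, which is an interval, and on it consider the two open sets $U_A=\{y: r_A(y)<\ell_B(y)\}$ (where $A$ is strictly left) and $U_B=\{y: r_B(y)<\ell_A(y)\}$ (where $B$ is strictly left). Disjointness of the polygons forces every $y\in[c,d]$ into exactly one of $U_A,U_B$, so these two open sets partition the connected interval $[c,d]$; hence one is empty, which is the claim. This lets me define a directed graph $G$ on the polygons with an edge $A\to B$ whenever $A$ and $B$ share heights and $A$ is the one on the left.

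The crux, and the step I expect to be hardest, is showing $G$ is acyclic; once that is known a sink (a vertex with no outgoing edge) exists, and by the sub-lemma a sink is precisely a polygon with nothing to its right. I would rule out directed cycles by examining a cycle $A_1\to\cdots\to A_k\to A_1$ of minimum length. A length-$2$ cycle is impossible directly from the no-swap sub-lemma. For length $3$, the three edges force the height-intervals $I_{A_1},I_{A_2},I_{A_3}$ to intersect pairwise, so by the one-dimensional Helly property they share a common height $y^{\ast}$; but at $y^{\ast}$ the three disjoint slice-intervals have a genuine left-to-right order on the line, contradicting the cyclic order $A_1$-left-of-$A_2$-left-of-$A_3$-left-of-$A_1$. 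For length $k\ge 4$, minimality shows that non-adjacent polygons cannot share heights, since any such shared height would, via the sub-lemma, give a chord that splices out a shorter directed cycle; hence the intersection graph of $I_{A_1},\dots,I_{A_k}$ is exactly the chordless cycle $C_k$, which is impossible because interval graphs are chordal. The delicate points to get right here are the continuity and single-valuedness of the boundary functions used in the sub-lemma, and the bookkeeping that a chord really shortens the cycle in the correct orientation.

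Finally I would assemble these pieces: acyclicity of the finite digraph $G$ guarantees a sink $A$, the sub-lemma guarantees that every polygon sharing heights with $A$ lies to its left, and since a rightward translation leaves $I_A$ (and hence the set of height-sharing neighbors) unchanged, $A$ can be slid to $+\infty$ without ever meeting another polygon. The remaining polygons still form a disjoint system that is monotone in direction $\theta+\frac{\pi}{2}$, so induction on the number of polygons completes the construction of the UTO in direction $\theta$.
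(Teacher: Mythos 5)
The first thing to say is that the paper itself contains no proof of this lemma: it is imported verbatim from Chandan and Patnaik \cite{Haldar} as background (the proof environment that follows Lemma~\ref{lemma:convex} in the source is the proof of the pentomino theorem, not of this statement). So there is no internal argument to compare against; your proposal supplies a self-contained proof, and its skeleton is sound and standard. The slice description, the no-swap sub-lemma via connectedness of the common height interval, the acyclicity of the ``left-of'' digraph (your one-dimensional Helly argument for $3$-cycles and the chordality of interval graphs for minimal cycles of length $\ge 4$ both check out, since a chord in either orientation splices out a strictly shorter directed cycle), and the sink-plus-induction assembly --- noting correctly that a rightward slide leaves $I_A$ unchanged --- together constitute a correct route to the UTO.

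Two technical points in the no-swap step need repair, and both matter for exactly the polygons this paper cares about. First, $\ell_P$ and $r_P$ are \emph{not} continuous on $I_P$: any polyomino with a reflex corner (an L-shape, the U-pentomino) has horizontal edges, at whose heights $\ell_P$ or $r_P$ jumps. Openness of $U_A$ and $U_B$ therefore cannot be deduced from continuity; it should instead be derived from closedness and compactness of the polygons: if $y_n \to y_0 \in U_A$ with $r_B(y_n) < \ell_A(y_n)$, then accumulation points give $(x_A, y_0) \in A$ and $(x_B, y_0) \in B$ with $\ell_B(y_0) \le x_B \le x_A \le r_A(y_0)$, contradicting $r_A(y_0) < \ell_B(y_0)$; after this substitution your connectedness argument goes through verbatim. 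Second, in the intended application the polyominoes start packed edge-to-edge, so ``disjoint'' must mean interior-disjoint, and at heights where slices abut you have $r_A(y) = \ell_B(y)$, so neither of your strictly-defined sets contains such $y$ and the claimed partition of $[c,d]$ fails as stated. The fix is to weaken the inequalities to $\le$, making $U_A$ and $U_B$ closed sets covering $[c,d]$; if both were nonempty they would intersect at a height where all four boundary values coincide, forcing both slices to be one and the same point, which for simple polygons can happen only at an endpoint of the common height interval (an interior degenerate slice would be a pinch point) and is harmless there, since boundary contact on the left does not obstruct the rightward slide of the sink. With these two repairs your argument is a complete and correct proof of the cited lemma, and in fact more detailed than anything the paper offers for it.
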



\begin{lemma}\label{lemma:convex}
 A UTO in both the $x$ and $y$ directions exists for any system of orthogonally convex polygons \cite{Haldar}.
\end{lemma}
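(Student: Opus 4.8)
The plan is to derive this lemma as an immediate corollary of Lemma~\ref{lemma:monotone}. By the definition given above, an orthogonally convex polygon is exactly one that is monotone in both the $x$ and the $y$ directions, and every polygon in the system is assumed to be of this type. Thus the hypotheses of Lemma~\ref{lemma:monotone} are simultaneously available in both coordinate directions, and the whole task reduces to instantiating that lemma twice with the correct choice of angle. Before doing so I would record the trivial but necessary observation that a polygon is monotone in direction $\theta$ if and only if it is monotone in direction $\theta + \pi$, since the lines perpendicular to $\theta$ coincide with the lines perpendicular to $\theta + \pi$ and the defining ``single segment'' condition is therefore identical; this lets me freely identify monotonicity in the $x$ direction with monotonicity in direction $0$ (equivalently $\pi$) and monotonicity in the $y$ direction with monotonicity in direction $\pi/2$.

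Next I would apply Lemma~\ref{lemma:monotone} once for each axis. Taking $\theta = 0$, the lemma's hypothesis requires every polygon to be monotone in direction $\theta + \tfrac{\pi}{2} = \tfrac{\pi}{2}$, that is, monotone in the $y$ direction; this holds by orthogonal convexity, so the lemma yields a UTO in the $x$ direction. Taking $\theta = \tfrac{\pi}{2}$, the hypothesis requires monotonicity in direction $\pi$, which by the remark above is the same as monotonicity in the $x$ direction; this again holds, so the lemma yields a UTO in the $y$ direction. Combining the two conclusions establishes that a UTO exists in both axis directions, which is precisely the claim.

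Because the statement is essentially a direct specialization of an already-established lemma, I do not expect a genuine obstacle. The only point requiring care is the angle bookkeeping: the separation direction $\theta$ and the direction of monotonicity differ by $\tfrac{\pi}{2}$, so one must be careful not to mistakenly demand $x$-monotonicity in order to separate in the $x$ direction. If one preferred a self-contained argument that avoided invoking Lemma~\ref{lemma:monotone}, I would instead use the perpendicular monotonicity to define a consistent left-to-right (respectively bottom-to-top) ordering of the polygons and peel off the extreme polygon one at a time, translating it to infinity along the axis; verifying that this order is well defined and that each extreme polygon has an unobstructed axis-aligned path is exactly the content that Lemma~\ref{lemma:monotone} already supplies, so leaning on the citation is the cleaner route.
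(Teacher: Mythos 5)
Your derivation is correct, and the angle bookkeeping is exactly right: by the paper's definition an orthogonally convex polygon is precisely one monotone in both the $x$ and $y$ directions, so applying Lemma~\ref{lemma:monotone} with $\theta = 0$ (which demands monotonicity in direction $\tfrac{\pi}{2}$, i.e.\ $y$-monotonicity) gives the UTO in the $x$ direction, and applying it with $\theta = \tfrac{\pi}{2}$ (which, via the trivial $\pi$-periodicity of monotonicity you rightly record, demands $x$-monotonicity) gives the UTO in the $y$ direction. One thing you should know, however: the paper does not actually prove this lemma at all. It is imported wholesale from \cite{Haldar}, and the \verb|proof| environment that sits immediately after the lemma statement in the source is, in content, the proof of the preceding theorem (that pentominoes cannot interlock, via the grouping argument for \Ushape{} pentominoes) --- it has nothing to do with orthogonal convexity in general. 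So there is no in-paper argument to compare yours against; your two-instantiation corollary of Lemma~\ref{lemma:monotone} is the natural way to discharge the statement given what the paper provides, and it is arguably cleaner than the paper's treatment, since it makes explicit that Lemma~\ref{lemma:convex} carries no content beyond Lemma~\ref{lemma:monotone} plus the definition of orthogonal convexity. Your fallback sketch (peeling off the extreme polygon along each axis) is essentially the content of Haldar's underlying separation argument, but since Lemma~\ref{lemma:monotone} is itself only cited rather than proved here, leaning on the citation, as you chose to, is the appropriate level of rigor for this paper.
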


\begin{proof}
As shown in Figure~\ref{pents}, all the pentominoes except the \Ushape{} (highlighted in Figure~\ref{pents}) are orthogonally convex. The \Ushape{} is monotone in either the $y$ direction or the $x$ direction.

\begin{figure}[h] \label{pents}
\begin{center}
\includegraphics[height=1in]{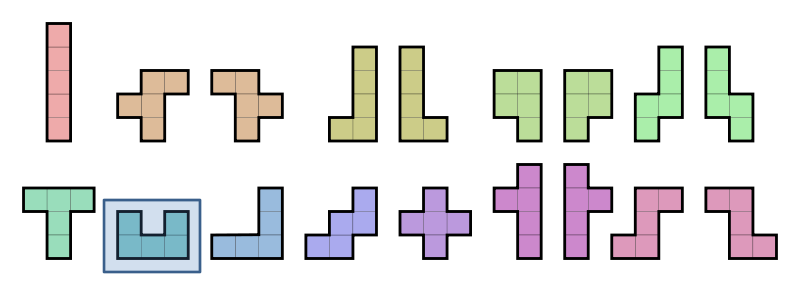}
\caption{All pentominoes, with the \Ushape{} highlighted. Taken from \cite{Wikipedia}}

\end{center}
\end{figure}

To prove that this subset of polyominoes can not interlock, we present an algorithm to spread them apart. Roughly, we will group our original polyominoes into large collective shapes which can be spread apart from each other more easily, after which all the polyominoes inside the groups can be spread apart, until no more groups are left. Specifically, we define each $y$-monotone \Ushape{} as its own group. Next, we group together any $x$-monotone \Ushape{} with the polyomino touching its pocket. We define any polyomino that is not in a group as its own group. We claim that all the resulting groups are $y$-monotone, and thus, they can all be separated by Lemma~\ref{lemma:monotone}. This claim is proved in the next paragraph. Since the only pentomino that is not $x$-monotone is the $y$-monotone \Ushape, we claim that each of the polyominoes in groups consisting of multiple shapes must be $x$-monotone. Thus, these groups can all be separated in the $y$ direction by Lemma~\ref{lemma:monotone}.

\begin{figure}[h]
\begin{center}
\includegraphics[height=1in]{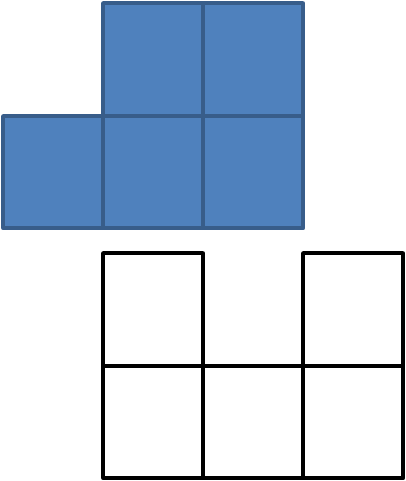}
\end{center}
\caption{case 1: pocket is not filled}
\label{ucase1}
\end{figure}

We now prove the claim that our groups are $y$-monotone. We know that all $y$-monotone \Ushape 's and all other polyominoes with five or fewer squares are $y$-monotone. We examine four cases in which $x$-monotone \Ushape 's are grouped into groups of more than one polyomino. In the first case, as shown in Figure~\ref{ucase1}, there is no shape touching the \Ushape 's pocket. In this case, we may treat the \Ushape{} as a $2 \times 3$ rectangle, which is $y$-monotone. In the second case, there are two polyominoes in the group, and it is orthogonally convex. In this case, the group is by definition $y$-monotone.

\begin{figure}[h]
\begin{center}
\includegraphics[height=1in]{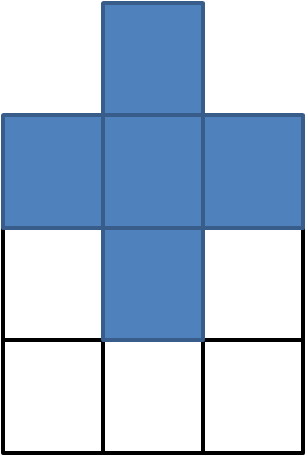}
\end{center}
\caption{case 2: the group is collectively orthogonally convex}
\end{figure}

In the third case, there are two polyominoes in the group, which is not orthogonally convex. It can be shown by a case analysis (see Figure~\ref{ucase2}) that all of these groups are $y$-monotone, although they are not $x$-monotone. 

\begin{figure}[h]
\begin{center}
\includegraphics[height=1in]{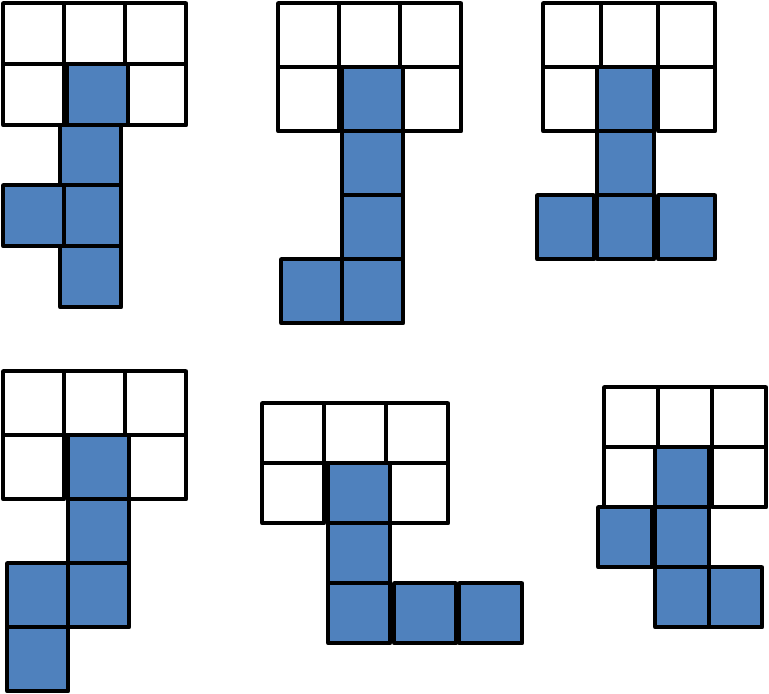}
\end{center}
\caption{case 3: the group is collectively $y$-monotone}
\label{ucase2}
\end{figure}

In the last case, there are two \Ushape 's grouped with the same third polyomino. In order to touch a \Ushape 's pocket, a polyomino must have a square in which three edges are exposed. Since any pentomino can have at most two of these partially exposed squares oriented in the $y$-direction, the largest number of \Ushape 's that be attached to any pentomino is two. After these three pentominoes are grouped together, no more \Ushape 's can attach, because the group no longer has any vertically oriented partially exposed squares. We further claim that any group formed by two \Ushape 's and a third pentomino must be $y$-monotone. The accuracy of these claims is verified by the case analysis shown in Figure~\ref{case3}.
\end{proof}

\begin{figure}[h]
\begin{center}
\includegraphics[height=1in]{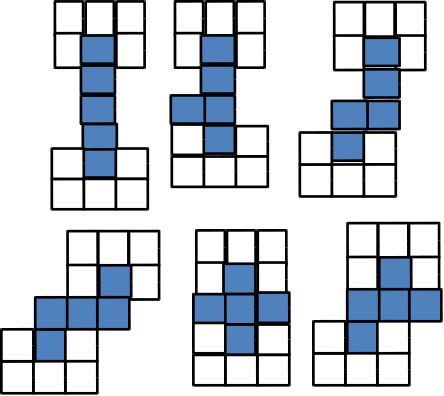}
\end{center}
\caption{case 4: the group is collectively $y$-monotone}
\label{case3}
\end{figure}

Thus, we have proved the claim that pentominoes cannot interlock. Next, we find a lower bound for polyomino interlockedness. We show that hexominoes can interlock. We also investigate the computational complexity of deciding interlockedness of systems containing hexominoes. 

\section{Interlockedness of Hexominoes}
Thus far, we have proved that collective interlockedness cannot occur for polyominoes with five or fewer squares. There are relatively simple nontrivial examples of interlocking octominoes. We show in this paper that a system of polyominoes containing hexominoes can also be interlocked. We present an example of interlocking hexominoes in Figure~\ref{fig:locked6s}. Before we prove the larger theorem, we present two important lemmas that will be useful in proving this theorem. The inspiration for these theorems comes from rules one and two in \cite{Connely}, which restrict rotation when lines are tightly confined within acute triangles.

\begin{lemma}\label{rectlemma1}
In any $\epsilon$-perturbation of the configuration shown in Figure~\ref{fignewrect} lying between the fixed lines $l$ and $m$, rectangle $ABCD$ has only been displaced horizontally for sufficiently small $\epsilon$. Or, equivalently, in any continuous motion of this configuration, this rectangle is pinned to horizontal sliders for positive time.
\end{lemma}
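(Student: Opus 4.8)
The plan is to reduce the claim to a single elementary inequality about the bounding box of a rotated rectangle. I would set up coordinates with the two fixed lines $l$ and $m$ horizontal and separated by a vertical distance $h$ equal to the height of $ABCD$ (this is the precise sense in which the rectangle is tightly confined in Figure~\ref{fignewrect}). Writing $w$ for the width of the rectangle, I would parametrize any perturbed placement by the position of its center together with a rotation angle $\theta$ measured from the original grid-aligned orientation. The constraint ``lying between $l$ and $m$'' then becomes the single scalar condition that the vertical extent of the perturbed rectangle does not exceed $h$.

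The heart of the argument is that rotation is forbidden, and this is the step I expect to matter most. When $ABCD$ is rotated by an angle $\theta$ with $|\theta| < \pi/2$, the height of its axis-aligned bounding box equals $g(\theta) = w|\sin\theta| + h\cos\theta$, an even function of $\theta$. At $\theta = 0$ this equals exactly $h$, filling the gap between $l$ and $m$ completely, so I must show that $g(\theta) > h$ for every small $\theta \neq 0$. This is where the tight-confinement hypothesis does the work: for $\theta > 0$ we have $g(\theta) = w\sin\theta + h\cos\theta$, whose derivative at $0$ equals $w > 0$, so the linear growth from $\sin\theta$ strictly dominates the quadratic loss from $\cos\theta$ and $g(\theta) > h$ for all sufficiently small positive $\theta$; the case $\theta < 0$ follows immediately from evenness. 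The main subtlety here is the non-smoothness of $g$ at $\theta = 0$, which is why I treat the two signs separately rather than differentiating once. Since the perturbed rectangle must still fit between $l$ and $m$, its bounding-box height cannot exceed $h$, and this forces $\theta = 0$.

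Once rotation is ruled out the remainder is immediate. With $\theta = 0$ the bounding box has height exactly $h$, so the rectangle occupies the entire gap and its vertical position is pinned: no vertical displacement is possible in either direction. The horizontal position, by contrast, is left completely unconstrained by two horizontal lines, so the only admissible motion is a horizontal slide. Choosing $\epsilon$ small enough that the perturbation keeps $|\theta|$ inside the window where the inequality $g(\theta) > h$ holds then yields the first statement. For the equivalent dynamical phrasing, I would observe that a continuous motion is a one-parameter family of such $\epsilon$-perturbations, so the same conclusion applies at every instant and $ABCD$ remains pinned to horizontal sliders for positive time.
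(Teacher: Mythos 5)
Your proposal is correct and follows essentially the same strategy as the paper: rule out rotation by showing that any small nonzero rotation strictly increases the rectangle's vertical extent beyond the gap between $l$ and $m$, then observe that the remaining translational freedom is purely horizontal. The only difference is in execution --- you derive the inequality from the explicit bounding-box height $w|\sin\theta| + h\cos\theta$ and its one-sided derivative at $0$, while the paper chases angles along the diagonal $BD$; your version is, if anything, more quantitative.
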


\begin{figure}[h]
\begin{center}
\includegraphics[height=1in]{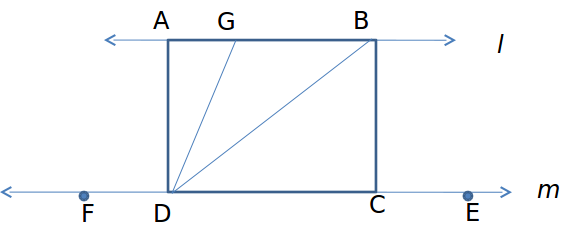}
\end{center}
\caption{A rectangle between two fixed lines}
\label{fignewrect}
\end{figure}

\begin{proof}
Let $A'B'C'D'$ be an $\epsilon$-displacement of rectangle $ABCD$. Let $\beta$ be the rotation angle from $ABCD$ to $A'B'C'D'$. By constraining $\epsilon$, we may constrain $\beta$ so that it is less than $\angle BDA$. By symmetry, we may constrain $\beta$ to being greater than or equal to zero, because a negative rotation angle is equivalent to a positive rotation angle about another point, since this rectangle has reflective symmetry. Given these constraints on $\beta$, we claim that if rectangle $A'B'C'D'$ is a rotation of $ABCD$ by $\beta$, the vertical distance between $B'$ and $D'$ is greater than the height of rectangle $ABCD$ (i.e. the vertical distance between $B$ and $D$). Since $\beta$ is less than $\angle BDA$, and $\angle BDE$ is $90^\circ{}- \angle BDA$, we know that $\angle BDE$ is greater than $90^\circ-\beta$. Therefore, since $\angle B'D'E$ is $\beta$ more than $\angle BDE$, $\angle B'D'E$ is closer to (but still under) $90^\circ$, and thus the vertical distance between $B'$ and $D'$ is greater than that between $B$ and $D$ for non zero $\beta$. 
Thus, rectangle $A'B'C'D'$ will intersect either lines $l$ or $m$, so the angle $\beta$ must equal $0$. Therefore, $A'B'C'D'$ must be strictly a horizontal or vertical translation of $ABCD$, and since the rectangle cannot intersect the lines $l$ or $m$, this translation must be strictly in the horizontal direction.
\end{proof}

\begin{lemma}\label{rectlemma2}
In any configuration $\epsilon$-close to Figure~\ref{fig:rect2} contained between fixed lines $l$ and $p$, each piece has simply been horizontally translated from its original position. In other words, each piece is pinned on horizontal sliders for positive time. In particular, no piece has been rotated. 
\end{lemma}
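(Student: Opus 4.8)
The plan is to bootstrap from Lemma~\ref{rectlemma1}. The single-rectangle argument there really establishes a purely geometric fact: rotating a rectangle by any nonzero angle strictly increases the vertical span between its extreme corners. I would apply this fact piece-by-piece to the stack of shapes appearing in Figure~\ref{fig:rect2}. First I would argue that for $\epsilon$ small enough, continuity forces the pieces to retain their original stacking order and their original contacts, so that their vertical ranges remain essentially disjoint and still fill the entire gap between the fixed horizontal lines $l$ and $p$.

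Next I would set up a global vertical-span accounting. Let $h_i$ denote the original height of piece $i$, so that $\sum_i h_i$ equals the distance between $l$ and $p$, since the pieces are packed tightly in the original configuration. After an $\epsilon$-perturbation, the vertical extent of piece $i$ is at least $h_i$, with equality precisely when its rotation angle is zero, by exactly the monotonicity computation underlying Lemma~\ref{rectlemma1}. Because the perturbed pieces still lie between $l$ and $p$ with essentially disjoint vertical ranges, the sum of their vertical extents cannot exceed $\sum_i h_i$. Comparing the two inequalities forces every vertical extent to equal its $h_i$, hence every rotation angle is zero and no piece has rotated. Once rotation is ruled out, each piece is a pure translate of its original, and the tight vertical packing between the fixed lines leaves no room for vertical motion, so each piece is confined to horizontal translation -- precisely the ``pinned to horizontal sliders'' conclusion.

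An equivalent and perhaps cleaner organization is to peel inward from the boundary: the outermost pieces are confined between a genuinely fixed line ($l$ or $p$) and a neighbor, so Lemma~\ref{rectlemma1} applies directly once I observe that its argument uses only vertical confinement and is unaffected by the neighbor sliding horizontally. Having pinned the outer pieces to horizontal motion, their now-horizontal contact edges serve as fresh fixed lines for the next layer, and I would induct inward until every piece is accounted for.

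I expect the main obstacle to be the bookkeeping that keeps the vertical-extent sum tight, that is, rigorously ruling out a perturbation in which one piece rotates by ``borrowing'' vertical room freed up elsewhere in the stack. This is exactly where the tight-packing and contact structure of Figure~\ref{fig:rect2}, together with the small-$\epsilon$ continuity argument, must be invoked with care. A secondary complication is that if some shape in the figure is not a single rectangle but a composite piece, I would need to apply the span-monotonicity to the particular corners that bear against the confining lines rather than to the piece as a whole; this is routine but should be checked case by case.
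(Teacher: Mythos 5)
Your ``equivalent and perhaps cleaner organization'' at the end --- peeling inward and reducing each layer to Lemma~\ref{rectlemma1} --- is essentially the paper's proof, so that part of the proposal is on target. But your primary argument, the global vertical-span accounting, has a genuine gap, and it is exactly the one you flag in your last paragraph as ``bookkeeping.'' The inequality ``sum of perturbed vertical extents $\le$ distance from $l$ to $p$'' requires the perturbed pieces to have pairwise disjoint vertical ranges, and an $\epsilon$-perturbation does not preserve this: piece $i$ can tilt so that its lowest corner lies at an $x$-coordinate where piece $i-1$ is far from its highest corner, so the two vertical ranges overlap by an amount that is first order in the rotation angles --- the same order as the excess extent $w|\beta|$ you are trying to exploit. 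The two first-order effects can cancel, so the accounting does not close on its own; ``essentially disjoint'' is doing all the work and is precisely what must be proved. A similar issue afflicts the unrepaired peel-inward version: the outermost piece is confined by the fixed line $l$ on one side but by a \emph{not-yet-pinned} neighbor on the other, so Lemma~\ref{rectlemma1}, which needs fixed lines on both sides, does not apply to it directly.

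The paper closes this hole by refusing to work with whole pieces. It carves out of each piece a narrow sub-rectangle sitting over a common vertical strip (built from the middle fifths of $A_1B_1$, with the points $G,H,G_i,H_i,E_i,F_i$), and takes $\epsilon$ smaller than one tenth of the strip width so that the sub-rectangles cannot slide out of the strip or slip past one another; within that strip the stacking order is genuinely preserved, each sub-rectangle is trapped between two horizontal lines (a fixed line on one side and a line such as $m$, guaranteed by the $2\epsilon$ margin on the neighboring sub-rectangle, on the other), and Lemma~\ref{rectlemma1} pins each sub-rectangle to horizontal sliders. Since each sub-rectangle is rigidly part of its piece, the whole piece is pinned. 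So your instinct to restrict attention to ``the particular corners that bear against the confining lines'' is the right one, but the restriction to a common strip is needed even when every piece is a plain rectangle, because adjacent rectangles in Figure~\ref{fig:rect2} are horizontally offset; I would build the proof around that strip construction from the start rather than treating it as a secondary complication.
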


\begin{figure}[h] 
\begin{center}
\includegraphics[height=2.5in]{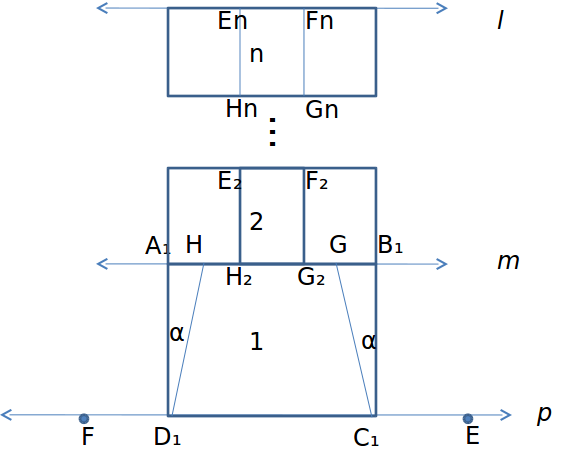}
\caption{$n$ rectangles between two fixed lines}
\label{fig:rect2}
\end{center}
\end{figure}

\begin{proof}
We divide segment $A_1B_1$ into fifths. We define points $G$ and $H$ on shape 1 such that $A_1H$ and $B_1G$ are both $\frac{A_1B_1}{5}$. We define the points $G_2$ and $H_2$ on shape 2 such that $GG_2$ and $HH_2$ are $\frac{A_1B_1}{5}$. We define points $E_2$ and $F_2$ on segment $A_2B_2$ such that $E_2F_2G_2H_2$ is a rectangle. Furthermore, we define points $E_iF_i$ and $G_iH_i$ on segments $A_iB_i$ and $C_iD_i$ on the $i$th rectangle such that $E_iF_iG_2H_2$ and $G_iH_iG_2H_2$ are rectangles. Let $\epsilon < \frac{A_1B_1}{10}$ be a sufficiently small constant. We define the angle $\alpha = \angle HD_1A_1 = \angle B_1C_1G$.  
Consider a configuration $\epsilon$-close to the configuration in Figure~\ref{fig:rect2} in which object $\chi$ has been moved to object $\chi '$; for example, point $A_1$ has been moved to point $A_1'$, shape 1 has been moved to shape 1', and similarly for all other points and shapes. In the configuration shown in Figure~\ref{fig:rect2}, we know that all the shapes are trapped between lines $l$ and $p$. 

Let $\beta$ be the rotation angle from $A_1B_1C_1D_1$ to $A_1'B_1'C_1'D_1'$. If $\epsilon$ is chosen small enough, $\beta$ is no larger than $\alpha$ in absolute value. By an argument similar to that used in Lemma~\ref{rectlemma1}, we can show that segment $G_2H_2$ is above line $m$. Since point $G_2$ is a distance of $2\epsilon$ away from $G_1$ and point $H_2$ is a distance of $2\epsilon$ from point $H_1$, we know that the segment $G_2H_2$ must stay above the segment $G_1H_1$, because the shapes cannot translate so that point $G_2$ protrudes past point $G_1$, and vice versa. Thus, rectangle $E_2'F_2'G_2'H_2'$ is trapped strictly above line $m$, and the region of intersection of the first $n-1$ rectangles with region $E_nF_nG_2H_2$ is a rectangular region within each of the first $n-1$ rectangles with width $G_2H_2$ that is trapped between the lines $l$ and $m$. 

We can apply this argument inductively to prove that every shape trapped between the lines $l$ and $m$ has a smaller rectangle inside it that is bounded between two other lines, as long as $\epsilon$ is constrained to $\frac{1}{10}$th the width of the smallest rectangular region being trapped. By Lemma~\ref{rectlemma1}, these smaller rectangular regions can only have been translated horizontally. Since these regions cannot move independently of the larger rectangles, all of the original rectangles can only have been translated horizontally, given that they have only been displaced by small enough $\epsilon$.
Similarly, there must be a rectangular region within shape 1 that must stay below shape 2 and thus below line $m$. Thus, there is a rectangle within shape 1 that has only been translated horizontally, by Lemma~\ref{rectlemma1}. Overall, rectangle 1 must have been translated horizontally, since it cannot move independently of its inner trapped rectangle.  
\end{proof}

\begin{figure}
\begin{center}
\includegraphics[height=3in]{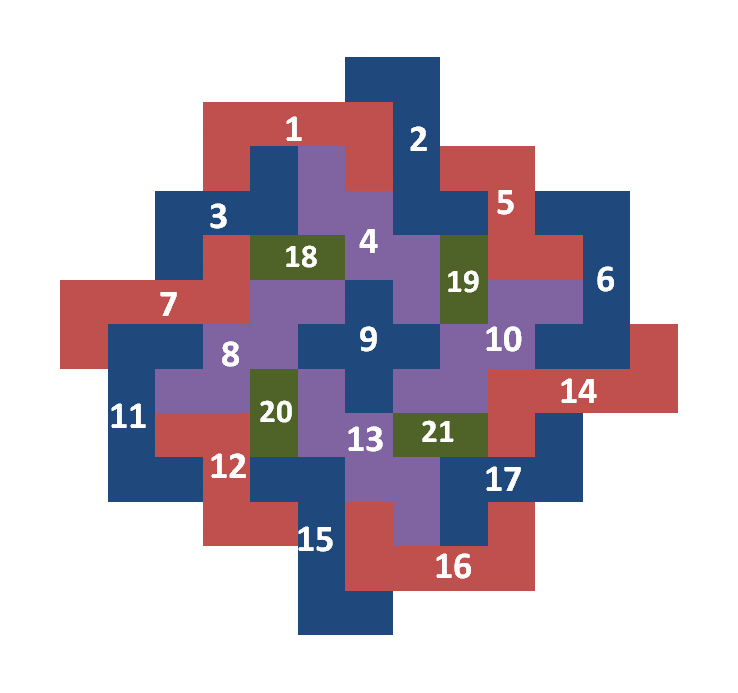}
\end{center}
\caption{Interlocking Hexominoes}
\label{fig:locked6s}
\end{figure}

\begin{theorem}
The system of polyominoes shown in Figure~\ref{fig:locked6s} is rigid, and therefore interlocked.
\end{theorem}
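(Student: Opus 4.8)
The plan is to show that every hexomino in Figure~\ref{fig:locked6s} is simultaneously confined to horizontal motion and to vertical motion, so that the only admissible perturbation is the trivial one. The two technical tools are already in hand: Lemma~\ref{rectlemma1} and Lemma~\ref{rectlemma2} state that a rectangular region (or a stack of such regions) trapped between two nearly-parallel channel lines can only slide along the channel, with no rotation. The entire argument then reduces to exhibiting, for each piece, two such channels whose axes are perpendicular.

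First I would make the symmetry of the construction explicit. The configuration in Figure~\ref{fig:locked6s} has four-fold ($90^\circ$) rotational symmetry, so it suffices to analyze one representative hexomino together with the neighbors that bound it; the conclusion for the remaining pieces follows by rotating the argument through $90^\circ$, $180^\circ$, and $270^\circ$. For the representative piece I would identify the two rectangular sub-regions of the hexomino that are pinched between edges of its neighbors: one arm lying in a horizontally-oriented channel, and a second arm lying in a vertically-oriented channel. These channels are exactly the configurations to which Lemmas~\ref{rectlemma1} and~\ref{rectlemma2} apply.

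Next, applying Lemma~\ref{rectlemma2} (or Lemma~\ref{rectlemma1} in the case of a single trapped rectangle) to the horizontal channel shows that the arm lying in it, and hence the whole rigid hexomino, can only have been translated horizontally. Applying the same lemma rotated by $90^\circ$ to the vertical channel shows the same piece can only have been translated vertically. Since both conclusions hold simultaneously for one rigid body, the only displacement consistent with both is zero: the piece has not translated and, in particular, has not rotated. By the rotational symmetry this holds for every hexomino, so no piece is displaced under any sufficiently small $\epsilon$-perturbation, which is precisely the statement of rigidity. Because rigidity is strictly stronger than non-separability, the system is interlocked as well.

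The main obstacle is that Lemmas~\ref{rectlemma1} and~\ref{rectlemma2} assume the bounding lines $l$, $m$, $p$ are fixed, whereas in Figure~\ref{fig:locked6s} each channel is formed by edges of other hexominoes that are themselves free to move by up to $\epsilon$, creating an apparent circularity. I would resolve this by treating the pinning as a first-order (instantaneous) constraint, as the phrase ``pinned to horizontal sliders for positive time'' in the lemmas already suggests: each application certifies that the instantaneous velocity of the trapped region is purely horizontal (respectively vertical). Assembling all of these perpendicular velocity constraints into a single linear system over all the pieces, and using that every piece contributes both a horizontal-only and a vertical-only constraint, the only solution is the all-zero velocity field; hence no channel wall can actually move and the ``fixed line'' hypothesis becomes self-consistent. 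I would also check that the $\epsilon$ demanded by the innermost, narrowest trapped rectangle is uniform across all pieces, which follows by taking the minimum of the finitely many thresholds produced by the individual applications of the lemmas.
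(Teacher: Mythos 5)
There is a genuine gap, and it sits exactly where you flagged the ``main obstacle.'' The slider constraints produced by Lemmas~\ref{rectlemma1} and~\ref{rectlemma2} are \emph{relative}: a piece trapped between two neighbors is pinned to a slider relative to those neighbors, not relative to the plane. So the claim that ``every piece contributes both a horizontal-only and a vertical-only constraint, hence the only solution is the all-zero velocity field'' does not follow: a uniform translation of the whole system satisfies every relative constraint, and more generally a piece may drift together with its channel walls. The zero conclusion only follows once the chain of relative constraints is traced back to the one polyomino held fixed (the paper's definition of rigidity fixes one piece for exactly this reason). Asserting that the assembled linear system has trivial kernel is restating the theorem, not proving it; whether the kernel is trivial depends entirely on the combinatorial structure of which pieces wall in which others, and that is the content you have omitted.

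Relatedly, most pieces in Figure~\ref{fig:locked6s} are \emph{not} initially sitting in two perpendicular channels whose walls are single hexominoes. The walls of the later channels are composite shapes that only act as rigid walls after they have themselves been proven ``fused.'' The paper's proof is therefore necessarily sequential: starting from the fixed shape 1 it pins shapes 3 and 4, uses those to pin 7 and 18, then 8 and 9, then 11; it then observes that 8, 11, 12 lie on both horizontal and vertical sliders relative to one another and so are fused into one effective rigid body, and only that fused triple can serve as a wall to pin shapes 20 and 15, and so on outward until shape 9 and the outer groups are absorbed. The four-fold symmetry is real, but the paper uses it only to transfer conclusions between symmetric groups at each stage; the pieces within one fundamental domain play genuinely different roles (anchor, wall, trapped arm), so a reduction to a single representative hexomino with two ready-made perpendicular channels does not go through. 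To repair your proof you would need to exhibit the explicit ordering in which pieces (and fused composites) are certified, which is essentially the paper's argument.
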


\begin{proof}

Roughly, we will first show that none of the shapes can rotate. Next, we will use the definition of rigidity to show that if one shape is fixed in the plane, all shapes are fixed.

Suppose shape 1 is fixed in the plane. As we can see, small portions of shapes 3 and 4 are trapped within shape 1. Therefore, these shapes are pinned to vertical sliders for positive time, by Lemma~\ref{rectlemma2}. As we can see, parts of shapes 7 and 18 are between shapes 3 and 4. Since the trapped portions of shapes 7 and 18 have width 1, we know that they must be between shapes 3 and 4 if $\epsilon < \frac{1}{2}$, because they will not be able to slide past each other in the vertical direction. Thus, shapes 3 and 4 impose the constraints of Lemma~\ref{rectlemma2} on shapes 7 and 18 for small enough $\epsilon$. Thus, shapes 7 and 18 are fixed on vertical sliders. We can use the same argument to show that shapes 8 and 9 are pinned to horizontal sliders by shapes 7 and 4. Furthermore, shape 11 is pinned to horizontal sliders by shapes 7 and 8. Thus, shape 11 is fixed on vertical sliders relative to shape 1 so it can not rotate. By symmetry, we claim that no polyomino in this configuration can rotate.

Now, we prove that this configuration is rigid. In other words, given that shape 1 is fixed, any continuous deformation of this configuration is constant for positive time. Recall that shapes 7, 8, 9, and 11 are fixed on vertical sliders (relative to 1). This implies that 20 and 13 are on vertical sliders by Lemma~\ref{rectlemma2}, because they are strictly between shapes 8 and 9. Since 12 is between 20 and 11, it is also on a vertical slider relative to 11. Also, notice that shapes 8 and 12 are fixed on horizontal sliders relative to 11. Since they are all on both horizontal and vertical sliders relative to each other, shapes 8, 11, and 12 cannot be displaced relative to each other, and so are fused together for positive time in any motion. By symmetry, shapes 1, 3, and 4, shapes 5, 6, and 10, shapes 11, 8, and 12, and shapes 13, 16, and 17 are likewise fused.

\begin{figure}
\begin{center}
\includegraphics[height=3in]{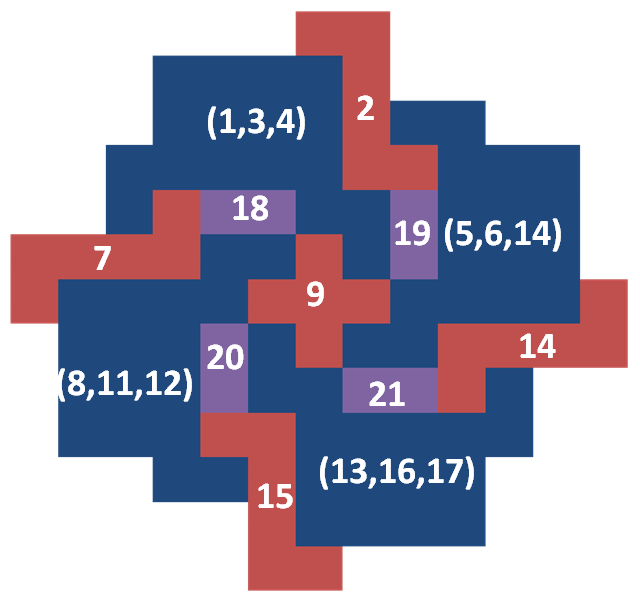}
\end{center}
\caption{Interlocking hexominoes, with fused shapes indicated}
\label{fig:locked62}
\end{figure}

As we can see, shape 20 is pinned to vertical sliders relative to 1 by shape (8, 11, 12) and shape 9 along with shape (13,16,17). Shape 15 is also pinned to vertical sliders between shapes 13 and (8, 11, 12). Shapes 20 and 15 are also fixed to horizontal sliders relative to (8, 11, 12). Thus, by symmetry, shapes 1, 3, 4, 18, and 7, shapes 11, 8, 12, 20, and 15, shapes 16, 13, 17, 21, 14, and shapes 6, 5, 10, 19, and 2 are all fused.

Shape (13, 16, 17, 14, 21) is trapped on vertical sliders relative to shape (11, 8, 12, 15, 20) and shape 9. Shapes (13, 16, 17, 14, 21) and 9 are fixed to horizontal sliders relative to (11, 8, 12, 15, 20). Therefore, shapes (13, 16, 17, 14, 21), (11, 8, 12, 15, 20), and 9 are all fused. By symmetry, all shapes are fixed.
\end{proof}

\section{PSPACE Hardness of Interlocking Hexominoes}

It follows from the work of Demaine and Hearn \cite{Hearn} that the problem of deciding polyomino interlockedness is PSPACE Hard. We now outline this argument. In \cite{Hearn}, it was shown that the Totally Quantified Boolean Formula (TQBF) problem, which is PSPACE complete, can be reduced to the sliding blocks problem. The goal of the TQBF problem is to find an assignment of $1$ or $0$ to variables of a given boolean formula (subject to certain additional constraints on the variables) such that the formula is satisfied.  Hearn and Demaine reduce the Boolean AND and OR operations necessary to realize TQBF to the two sliding block gadgets shown in Figure~\ref{fig:dominogates}. 

\begin{figure}[h]
\begin{center}
\includegraphics[height=1.5in]{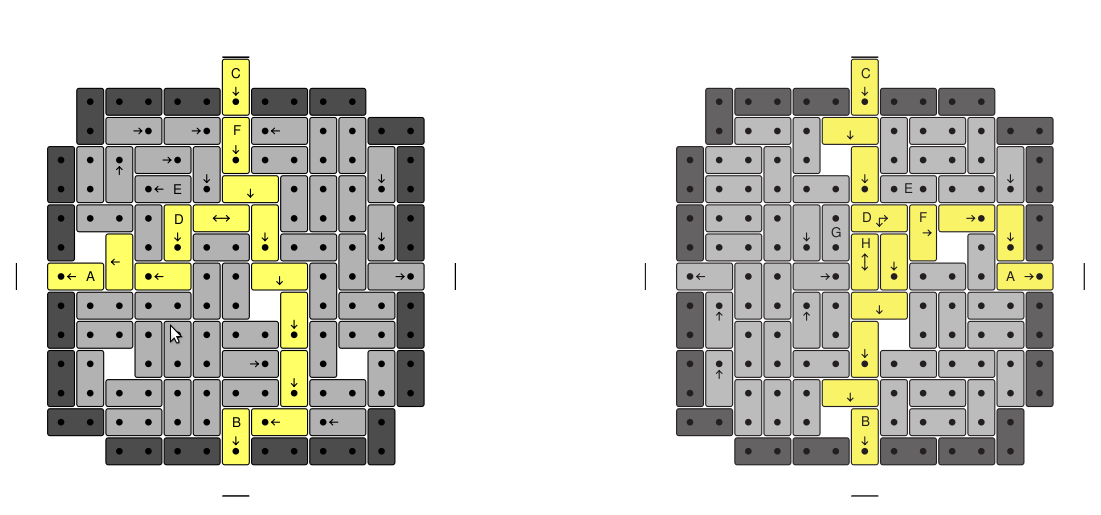}
\caption{Gates realized with dominoes. Taken from Demaine and Hearn \cite{Hearn}.}
\label{fig:dominogates}
\end{center}
\end{figure}

Although their gadgets use only dominoes, an implicit requirement in their problem is a large, rigid boundary that will hold all of their gadgets together. In a different paper, Demaine et al. \cite{Uehara} created a construction involving a boundary that is only non-interlocking if one key piece is moved backwards by one square. If we combine this boundary with their AND and OR constructions, we can rephrase the sliding block representation of TQBF as a representation of polyomino interlockedness. We know that deciding whether or not a key peice can be moved back one square in the sliding block puzzle is PSPACE hard. Since it is possible to create a system of large polyominos whose interlockedness is contingent upon the translation of a key peice, it is possible to reduce sliding blocks to polyomino interlockedness. Thus, the problem of deciding polyomino interlockedness is PSPACE hard.

Here, we show how to construct a boundary that disassembles after the translation of one key peice using hexominoes (Figure~\ref{fig:pspace6s}). This, paired with the argument in the above reduction, implies that the problem of deciding the interlockedness of system of shapes containing only hexominoes and smaller polyominoes is PSPACE hard. We prove the boundary is rigid after discussing a key lemma.

\begin{theorem}
Determining interlockedness is PSPACE hard for a system of polyominoes with only Hexominoes and smaller polyominoes.
\end{theorem}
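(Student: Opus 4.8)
The plan is to establish PSPACE-hardness via a reduction from the sliding-block puzzle problem of Hearn and Demaine \cite{Hearn}, which is already known to be PSPACE-complete. The key conceptual move is that interlockedness of a polyomino system can be made contingent on whether a single ``key'' piece can be slid backward by one unit square; since deciding whether that key piece can be freed is PSPACE-hard in the sliding-block formulation, the corresponding interlockedness question inherits that hardness. Concretely, I would first recall the Hearn--Demaine AND and OR gadgets realized with dominoes (Figure~\ref{fig:dominogates}), noting that these gadgets by themselves require an external rigid boundary to hold the configuration together and to prevent the dominoes from simply dispersing. The entire reduction hinges on supplying such a boundary \emph{built entirely from hexominoes (and smaller polyominoes)} that is rigid in its default state but disassembles the moment the key piece is retracted.

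The main steps, in order, would be: (1) describe the explicit hexomino boundary construction of Figure~\ref{fig:pspace6s}, analogous to the all-or-nothing boundary of Demaine et al.\ \cite{Uehara} but rebuilt from hexominoes; (2) prove that this boundary is rigid when the key piece is in place --- this is where I would invoke Lemma~\ref{rectlemma1} and Lemma~\ref{rectlemma2} to show that the trapped rectangular sub-regions of each boundary piece are pinned to horizontal (or vertical) sliders, so no piece can rotate or translate, exactly as in the proof that the hexomino system of Figure~\ref{fig:locked6s} is rigid; (3) show conversely that once the key piece is withdrawn by one square, the boundary loses the confinement that forced these slider constraints, so some subset of pieces can be translated arbitrarily far away, i.e.\ the system is no longer interlocked; and (4) combine this boundary with the domino AND/OR gadgets to embed an arbitrary sliding-block instance, so that the combined polyomino system is interlocked if and only if the key piece cannot be freed in the sliding-block puzzle. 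Since all gadget pieces are dominoes and all boundary pieces are hexominoes or smaller, every polyomino in the construction has at most six squares, which yields the stated bound.

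The hard part will be step (2)/(3): verifying that the hexomino boundary is genuinely rigid in its locked state yet becomes freely separable after the one-square retraction of the key piece. The rigidity direction must rule out \emph{all} continuous motions (including simultaneous rotations and translations of subsets), and the cleanest route is to reuse the machinery already developed --- showing each boundary piece contains an inscribed rectangle trapped between two fixed lines so that Lemma~\ref{rectlemma2} forbids rotation and forces horizontal sliders, then propagating mutual horizontal-and-vertical slider constraints to ``fuse'' groups of pieces exactly as in the proof of the previous theorem. The separability direction requires checking that retracting the key piece removes precisely the geometric confinement responsible for one of those trapped rectangles, after which a unidirectional translational ordering becomes available. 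I would therefore carry out the argument by reducing the behavior of the hexomino boundary to the already-analyzed rigidity lemmas, so that the only genuinely new content is the local geometry of Figure~\ref{fig:pspace6s} near the key piece, rather than a fresh rigidity analysis from scratch.
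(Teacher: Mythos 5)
Your proposal matches the paper's proof in essentially every respect: the same reduction from the Hearn--Demaine sliding-block formulation, the same reliance on a hexomino boundary (Figure~\ref{fig:pspace6s}) that is rigid unless a key piece retracts one square, and the same strategy of proving rigidity by invoking Lemma~\ref{rectlemma1} and Lemma~\ref{rectlemma2} to pin pieces to sliders and then fusing groups as in the previous theorem. The only detail you do not anticipate is that the paper introduces one additional piece of machinery inside the proof (Lemma~\ref{Zlemma}, which propagates sliding constraints along a chain of interleaved shapes), but this falls squarely within your stated plan of reusing and extending the slider-constraint machinery.
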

 
\begin{figure}
\begin{center}
\includegraphics[height=3.5in]{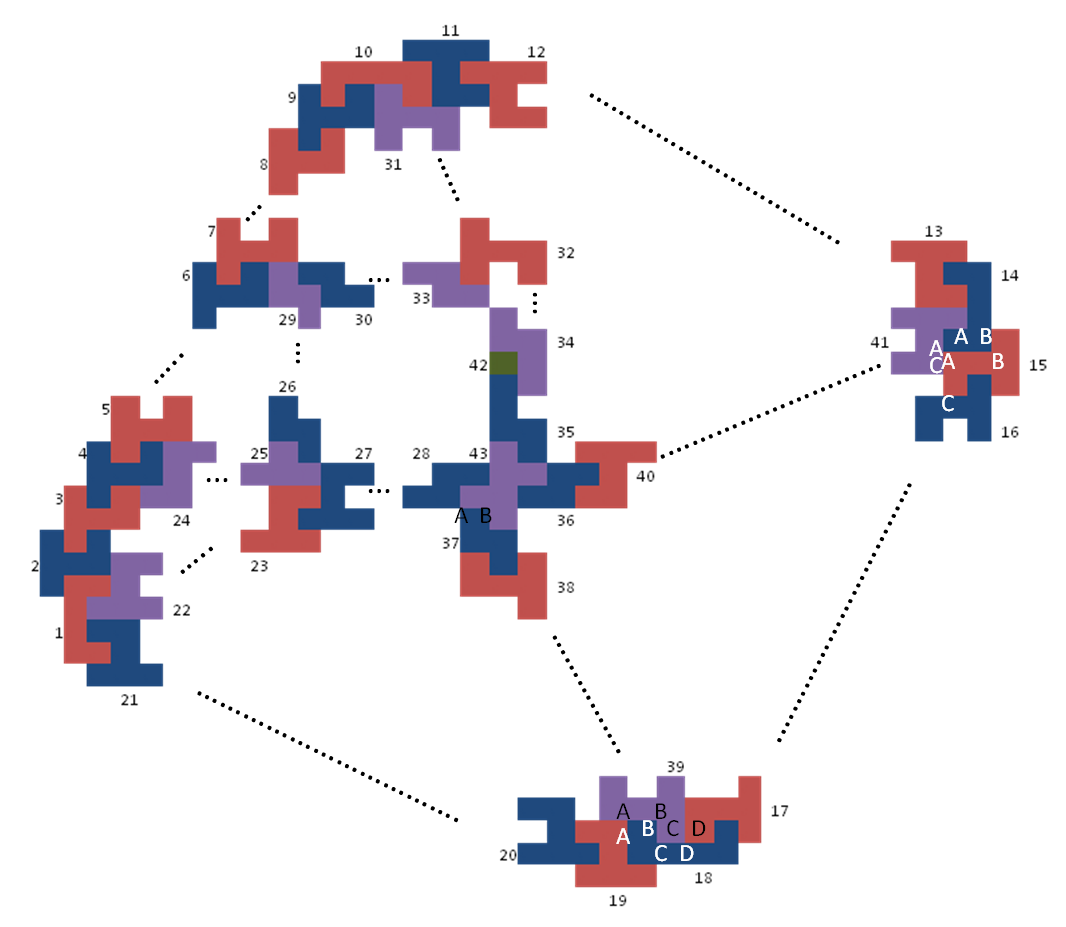}
\end{center}
\caption{ PSPACE hardness of interlocking hexominoes}
\label{fig:pspace6s}
\end{figure}

\begin{proof}
Here, we present a configuration of polyominoes with six or fewer squares in Figure~\ref{fig:pspace6s}. We prove that the configuration of hexominoes in this figure simulates a boundary that can disassemble if and only if one key piece is moved backwards by one square. As discussed previously, we can fill this boundary with dominoes so that deciding if the key piece can be moved backwards is PSPACE hard. We begin the proof by stating an important lemma.

\begin{lemma}\label{Zlemma}
In a continuous deformation of the configuration shown in Figure~\ref{fig:pspace6lemma}, each shape can slide right if and only if shape 1 can slide right, and each shape can slide left if and only if shape 4 can slide left.
\end{lemma}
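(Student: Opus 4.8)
The plan is to mirror the two-stage strategy of the preceding rigidity theorem: first rule out rotation of every shape, and then analyze the remaining one-dimensional (horizontal) freedom to show that the interlocking notches couple the shapes into a single sliding behavior, gated on the right by shape 1 and on the left by shape 4.

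First I would establish that no shape in Figure~\ref{fig:pspace6lemma} can rotate. As in the proof that the hexomino system of Figure~\ref{fig:locked6s} is rigid, I would locate inside each shape a rectangular sub-region trapped between two (locally fixed) edges of its neighbors, so that Lemma~\ref{rectlemma1} and Lemma~\ref{rectlemma2} force that rectangle---and hence the whole shape---onto horizontal sliders for positive time. Once rotation is excluded, every shape's configuration is described by a single scalar, its horizontal displacement $d_i$ with $|d_i| < \epsilon$.

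Next I would read off, for each adjacent interlocking pair, the blocking relation between their horizontal displacements. The geometry of the notches should yield, for each pair $(i,i+1)$, an inequality of the form ``shape $i$ cannot move right past shape $i+1$ unless shape $i+1$ also moves right,'' and symmetrically ``shape $i+1$ cannot move left past shape $i$ unless shape $i$ also moves left.'' Concretely, a rightward displacement cannot be absorbed locally and must be inherited from the neighbor on one side, while a leftward displacement must be inherited from the neighbor on the other side. Chaining these inequalities along the configuration then propagates any rightward motion back to shape 1 and any leftward motion back to shape 4: if some shape has $d_i > 0$, the chain of rightward-blocking relations forces $d_1 > 0$, and conversely $d_1 > 0$ frees the entire chain to the right; the identical argument with the leftward relations ties every shape's leftward freedom to shape 4. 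This establishes both biconditionals.

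The main obstacle will be the case analysis in the second stage: I must verify that the notches in Figure~\ref{fig:pspace6lemma} genuinely produce the asymmetric blocking relations claimed---that at every junction rightward motion is obstructed on the shape-1 side and leftward motion on the shape-4 side---with no junction admitting independent local play that would break the chain. This requires checking, junction by junction and for $\epsilon$ small enough that no trapped rectangle can slip past its confining edges, that the only horizontal displacements consistent with non-intersection are exactly those in which the blocking inequalities hold simultaneously along the whole chain. Treating these as simultaneous motions, rather than moving one shape at a time, is where the argument is most delicate.
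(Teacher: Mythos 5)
Your proposal matches the paper's proof in both structure and substance: the paper likewise first invokes Lemma~\ref{rectlemma2} to pin every shape to horizontal sliders (excluding rotation), and then chains the pairwise blocking relations---shape 2 cannot slide right unless shape 1 does, because segment $B_2C_2$ would cross $F_1G_1$---recursively down the row, with the leftward direction following by symmetry from shape 4. The junction-by-junction verification you flag as the delicate step is exactly what the paper carries out (tersely, by reading the notch geometry off the figure), so no new idea is needed.
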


\begin{figure}
\begin{center}
\includegraphics[height=2in]{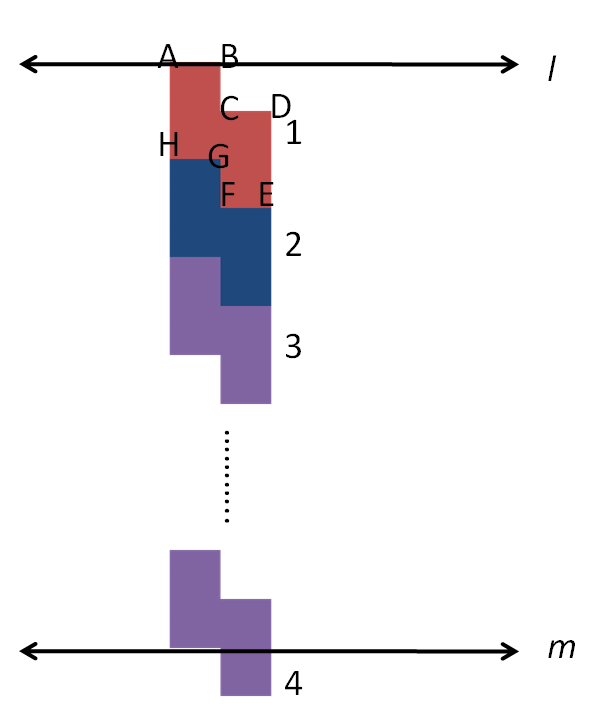}
\end{center}
\caption{PSPACE hardness of interlocking hexominoes}
\label{fig:pspace6lemma}
\end{figure}

\begin{proof}
By Lemma~\ref{rectlemma2}, all of these shapes are pinned to horizontal sliders for positive time. In other words, each piece can slide either to the left or the right, and cannot rotate. As we can see in Figure~\ref{fig:pspace6lemma}, shape 2 cannot slide right if shape 1 cannot slide right, because this would cause line $B_2C_2$ to cross line $F_1G_1$. We can apply the same argument to show that shape 3 cannot slide right unless shape 2 slides right. Thus, shape 3 can only slide right if shape 1 slides right. Applying this argument recursively, we show that all shapes between 1 and 4 cannot slide right unless shape 1 slides right. By symmetry, we show that all the shapes between 1 and 4 cannot slide left unless shape 4 slides left. 
\end{proof}
 
We now show that the configuration in Figure~\ref{fig:pspace6s} is rigid unless shape 42 is moved one square to the left. Recall that if a configuration is rigid, then any $\epsilon$-perturbation of the configuration has not changed, given that one shape is fixed and that $\epsilon$ is sufficiently small. We assume that shape 1 is fixed. Given small enough displacement $\epsilon$, it follows from recursive application of Lemma~\ref{rectlemma2} that shapes 22--23 and 27-- 28 are pinned to horizontal sliders (relative to 1), and that shapes 19-21 are pinned to horizontal sliders (relative to 1).
By the same argument, shapes 37--39 and shapes 16--17 are pinned to vertical sliders relative to shape 18. In other words, these shapes cannot translate horizontally or rotate relative to shape 18. 

We now show that the distance between points $A_{18}$ and $A_{37}$ cannot decrease. Recall that shape 39 cannot rotate or slide horizontally relative to shape 18. It can only slide vertically relative to shape 18. The distance between corresponding points on segments $A_{39}B_{39}$ and $A_{18}B_{18}$, as well as the distance between segments $C_{18}D_{18}$ and $A_{39}B_{39}$ can only increase. We can apply the same argument to show that the distance between each of the shapes between 37 and 39 can only have increased or stayed the same. It follows from repeated application of this argument that the overall distance between points $A_{18}$ and $A_{37}$ cannot decrease. 

We now show that shapes 38--39 and 18 are fused (i.e. cannot move relative to each other for positive time given a continuous deformation). Recall that shapes 28 and 19 are pinned to horizontal sliders. This means that shape 18 cannot slide down, disregarding collective rotation. Since shape 39 must stay above shape 18, it also cannot slide down. It follows by applying this argument repeatedly that all shapes between 37 and 39 cannot slide down. We know that shape 43 cannot translate upwards. Furthermore, we observe that if shape 43 has a positive rotation angle $\beta$, the vertical distance between points $D_{43}$ and $A_{43}$ will increase. If shape 43 has a negative rotation angle, the vertical distance between points $C_{43}$ and $D_{43}$ will increase. It follows that some portion of shape 43' must be lower than shape 43 given any nonzero rotation angle (where 43' is 43 after it is moved). Given that shapes 18 and 37--39 cannot collectively rotate, it follows that shape 37 cannot slide up, regardless of the movements of shape 43. This implies that shapes 37--39 and 18 cannot slide up. Since shapes 37--39 and 18 are pinned to vertical sliders relative to each other, and since they cannot slide apart from each other vertically, they are fused together for positive time. We can apply the same argument to show that shapes (22--23 and 27--28) are fixed, and that shapes (31--32 and 10) and shapes (36, 40--41, and 14) are fused.

We now show that collective shapes cannot rotate. If shape (37--39 and 18) had a positive rotation angle, the vertical distance between all points on segments $A_{37}B_{37}$ and $D_{18}E_{18}$ would increase. We know that shape 19 cannot rotate or translate downwards, as it is pinned to horizontal sliders (relative to 1). Therefore, all points on segment $E_{18}F_{18}$ cannot move down ($F_{18}$ is $2\epsilon$ away from $F_{19}$). This means that all points on segment $A_{37}B_{37}$ must move up. Recall, however, that given small enough $\epsilon$, some portion of shape 43' must be below shape 43. It follows that shape (37--39 and 18) cannot have a positive rotation angle, or else 43' and 37' will intersect. By the same argument, shapes (31--32 and 10) and shapes (36, 40--41, and 14) cannot have a positive rotation angle. We now show that this shape cannot have a negative rotation angle. Recall that shapes 15--17 are pinned to horizontal sliders relative to shape 18. Thus, they cannot rotate independently of shape 18. If shapes 15--17 and (37--39 and 18) had a negative rotation angle, shape 15' would be to the left of shape 15. Since segment $A_{15}C_{15}$ is to the right of segment $A_{41}C_{41}$, each point on segment $A_{41}'C_{41}'$ must be on the left of segment $A_{41}C_{41}$. By Lemma~\ref{Zlemma}, shape (36, 40--41, and 14) cannot translate right. However, this shape can have a negative rotation angle. Thus, we have shown that (37--39 and 18) had a negative rotation angle only if (36, 40--41, and 14) had a negative rotation rotation angle. In order for (36, 40--41, and 14) to have a negative rotation angle, shape (31--32 and 10) must, by symmetry, slide right. However, shape (31--32 and 10) cannot slide right, because the shapes between 32 and 43 cannot slide together. Furthermore, shape (31--32 and 10) and shapes 2--10 cannot have a negative rotation angle, because by symmetry, this would require shape 1 to slide right. However, shape 1 is fixed.

\begin{figure}
\begin{center}
\includegraphics[height=2in]{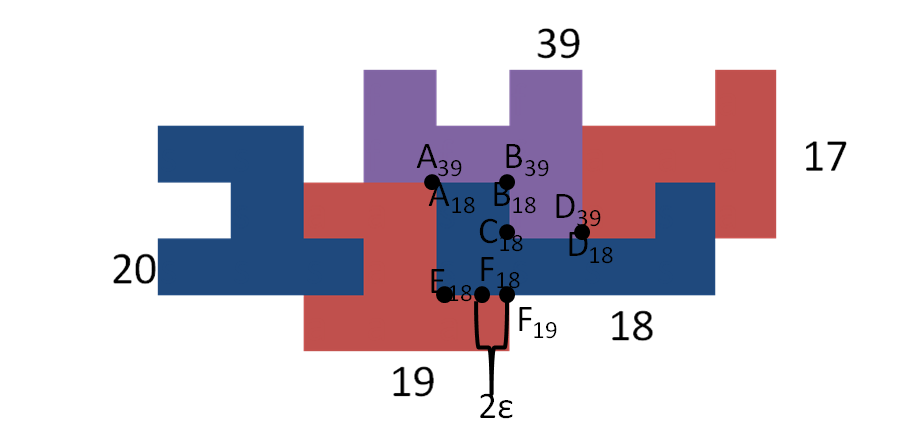}
\end{center}
\caption{PSPACE hardness of interlocking hexominoes}
\label{fig:moredetail}
\end{figure} 

We now show that shape (37--39 and 18) are fused with shapes 15--17. We know that collective rotation is impossible. We know that shapes 15--17 are pinned to vertical sliders relative to shape 18. We also know that none of these shapes can translate downwards relative to each other without intersection. As shown in Figure~\ref{fig:pspace6s}, shape 15 cannot slide up unless segment $A_{14}'B_{14}'$  is above segment $A_{14}B_{14}$ . As mentioned previously, shape (36, 40--41, and 14) cannot slide up, and cannot have a negative angle of rotation. Therefore, shape 15 cannot slide up. It follows that shapes 16--17 cannot slide up. Conversely, shape 17 cannot slide down without intersecting shape 18. It follows that shapes 15--17 cannot slide down. Since shapes 15--17 are trapped on vertical sliders and cannot slide up or down, they are all fused to shape (37--39 and 18). By symmetry, shapes (22--23, 27--28, and 19--21) are fixed, and shapes (31--32 and 2--10) and shapes (36, 40--41, and 11--14) are fused.  

We now show that all shapes outside the boundary are fixed. Shape (15--18 and 37--39) cannot slide up without intersecting shape 43. It cannot slide down without intersecting 19. It cannot slide left without intersecting shape 19. It cannot move right unless, by Lemma~\ref{Zlemma}, the shapes between 35 and 32, and shape (31--32 and 2--10) slides right. However, shape (31--32 and 2--10) cannot slide right without intersecting shape 1. Thus, shape (15--18 and 37--39) is fixed. By symmetry, shape  (36, 40--41, and 11--14) is also fixed. Shape (31--32 and 2--10) cannot slide right without intersecting shape 1. It cannot slide down without intersecting shape 1, and it cannot slide up without intersecting shape 11. It can slide left only if the shapes between 32 and 43 can slide left, and shape 43 cannot slide left without intersecting shape 37. Thus, shape (31--32 and 2--10) is fixed. 

We now show the boundary is fixed. As we can see, shapes 24--25 are pinned to vertical sliders between shapes 4 and 27 by Lemma~\ref{rectlemma2}. Shape 24 cannot slide up without intersecting shape 5. Shape 25 cannot slide down without intersecting shape 23. Thus, by Lemma~\ref{Zlemma}, these shapes are fixed. Similarly, the shapes between 26 and 29 are pinned to horizontal sliders by 23 and 7. They can only slide right if shape 29 can slide right. However, shape 29 is pinned to vertical sliders by shapes 6 and 32. Thus, the shapes between 29 and 26 are fixed. The shapes between 29 and 33 can only move if shape 33 can slide up. However, shape 33 cannot slide up without intersecting shape 32. Thus, the shapes between 29 and 33 are fixed. The shapes between 32 and 35 are pinned to horizontal sliders. They can only move if shape 35 can slide left. However, shape 35 cannot slide left without intersecting shape 43. Thus, the shapes between 32 and 35 are fixed. Finally, we show that shape 43 is fixed. Shape 43 cannot slide left, right, up, or down by Lemma~\ref{Zlemma}. Recall that if shape 43 had a nonzero rotation angle, some portion of segment $A_{43}'B_{43}'$ would be below $A_{43}B_{43}$. Since shape 37 is fixed, this would cause shape 43 to intersect shape 37. Thus, all shapes are fixed, given that shape 42 cannot slide left. If shape 42 slides left one unit, shape 35 can slide up one unit. This will allow shapes (15--18 and 37--39), 43, and  (36, 40--41, and 11--14) to slide right. It can be checked that the rest of the boundary will disassemble.

\end{proof}

\section{Conclusion}
In conclusion, the question that was posed was answered completely in this research. We proved that a system of polyominoes with five or fewer squares cannot interlock by presenting an algorithm to separate them. We recreated Hearn's and Demaine's proof of PSPACE hardness of interlocking polyominoes using only hexominoes and smaller polyominoes. 
As has been discussed in the work of Erik Demaine [7] and [5], the problem of determining interlockedness is related to path planning problems, which are important and difficult problems in Artificial Intelligence. Many of the structures discussed in this paper are extremely rigid and therefore impossible to break apart without damaging the individual shapes' internal structures. Thus, interlockedness may be a potentially useful element in the evolution of the internal structures of extremely rigid proteins, including, perhaps, the proteins in the cell wall. Whether or not such a model is useful should be investigated further.  
Further interesting research on this topic includes discovering further subsets of polyominoes that can and cannot interlock, and generalizing these results to polyominoes (and perhaps other shapes) in three dimensions, which may be much more complex and important.

\section{Acknowledgements}
I would like to thank my mentor, Zachary Abel, who is a graduate student at MIT, and who oversaw this project. I would like to thank CEE (Center for Excellence in Education) and RSI (Research Science Institute) for giving me the opportunity to do this research. I would like to thank MIT for its support of the RSI program. I would like to thank Mr. John Yochelson and Dr. Christine Hill for sponsoring my stay at RSI. I would like to thank Dr. Tanya Khovanova, Dr. Kartik Venkatram, and Professor David Jerison of the MIT mathematics department for giving me advice about the direction of this project. I would like to thank the RSI staff, especially Dr. John Rickert, for assisting in the writing of this paper.

\begin{singlespace}
 


\end{singlespace}

\include{appa}
\end{document}